\theoremstyle{plain}
\newtheorem{theorem}{Theorem}[section]
\newtheorem{corollary}[theorem]{Corollary}
\newtheorem{lemma}[theorem]{Lemma}
\newtheorem{proposition}[theorem]{Proposition}
\theoremstyle{definition}
\newtheorem{definition}[theorem]{Definition}
\theoremstyle{remark}
\newtheorem*{example}{Example}
\def\span{{\rm span}\,}
\def\codim{{\rm codim}\,}
\newcommand{\abs}[1]{\lvert#1\rvert}
\newcommand{\norm}[1]{\lVert#1\rVert}
\newcommand{\bigabs}[1]{\bigl\lvert#1\bigr\rvert}
\newcommand{\bignorm}[1]{\bigl\lVert#1\bigr\rVert}
\newcommand{\Bignorm}[1]{\Bigl\lVert#1\Bigr\rVert}
\renewcommand{\le}{\leqslant}
\renewcommand{\ge}{\geqslant}
\newcommand{\term}[1]{{\textit{\textbf{#1}}}}
\begin{document}
\baselineskip 18pt

\title[Almost invariant half-spaces of algebras of operators]
{Almost invariant half-spaces of\\ algebras of operators}

\author[A.I.~Popov]{Alexey I. Popov}
\address{Department of Mathematical and Statistical Sciences, University of Alberta, Edmonton, AB, T6G\,2G1. Canada}
\email{apopov@math.ualberta.ca}
\keywords{Operator algebras, almost invariant subspace, half-space}
\subjclass[2000]{47A15, 47L10}

\date{\today.}
\begin{abstract} Given a Banach space $X$ and a bounded linear operator $T$ on $X$, a subspace $Y$ of $X$ is almost invariant under $T$ if $TY\subseteq Y+F$ for some finite-dimensional ``error'' $F$. In this paper, we study subspaces that are almost invariant under every operator in an algebra $\mathfrak A$ of operators acting on $X$. We show that if $\mathfrak A$ is norm closed then the dimensions of ``errors'' corresponding to operators in $\mathfrak A$ must be uniformly bounded. Also, if $\mathfrak A$ is generated by a finite number of commuting operators and has an almost invariant half-space (that is, a subspace with both infinite dimension and infinite codimension) then $\mathfrak A$ has an invariant half-space.
\end{abstract}

\maketitle

\section{Introduction}

The notion of an almost invariant subspace was recently introduced in~\cite{APTT}. If $T$ is an operator on a Banach space $X$ then a subspace $Y$ of $X$ is called \term{almost invariant under $T$} if there exists a finite-dimensional subspace $F$ of $X$ such that 
\begin{equation}\label{Y+F}
TY\subseteq Y+F.
\end{equation}
Clearly, if $Y$ has finite dimension or finite codimension then $Y$ is almost invariant under every operator on $X$.

\begin{definition}\cite{APTT} A subspace $Y\subseteq X$ is called a \term{half-space} if $Y$ is both of infinite dimension and of infinite codimension.
\end{definition}

The question whether every operator on a Banach space has an almost invariant half-space was posed in~\cite{APTT}; it was solved there for certain classes of operators. Just as the studies of transitive algebras generalize the Invariant Subspace Problem for a single operator, the purpose of this paper is to introduce and study the notion of a subspace that is simultaneously \emph{almost} invariant under every operator in a given algebra of operators.

Throughout the paper, $X$ is a Banach space. The term ``subspace'' refers to a norm closed linear subspace of $X$, while the term ``linear subspace'' refers to a subspace that is not necessarily closed. Whenever we say that $\mathfrak A$ is an algebra of operators, we mean that $\mathfrak A$ is an algebra of operators on $X$. Also, given a sequence $(x_i)$, we write $[x_i]$ for the closed linear span of $(x_i)$.

\begin{definition}
Let $\mathcal C\subseteq L(X)$ be an arbitrary collection of operators and $Y\subseteq X$ a subspace of $X$. We call  $Y$ \term{almost invariant under $\mathcal C$}, or \term{$\mathcal C$-almost invariant} if $Y$ is almost invariant under every operator in~$\mathcal C$.
\end{definition}

Like in the case of a single operator, every subspace that is not a half-space is automatically almost invariant under every collection $\mathcal C$ of operators on~$X$.

In Section~\ref{errors-study}, we study the finite-dimensional ``errors'' $F$ appearing in formula~\eqref{Y+F} corresponding to operators in an algebra $\mathfrak A$. We prove that if $\mathfrak A$ is an algebra without invariant half-spaces then for an $\mathfrak A$-almost invariant half-space $Y$ these finite-dimensional subspaces cannot be the same (Proposition~\ref{same-F}). On the other hand, we prove (Theorem~\ref{uniform-bdd}) that if $\mathfrak A$ is norm closed then these finite-dimensional subspaces cannot be ``too far apart''; the dimensions of these subspaces must be uniformly bounded.

In Section~\ref{inv-study}, the invariant subspaces of algebras having almost invariant half-spaces are investigated. It is proved that if $\mathfrak A$ is
a norm closed algebra generated by a single operator then existence of an $\mathfrak A$-almost invariant half-space implies existence of an $\mathfrak A$-invariant half-space (Theorem~\ref{inv-existence}). This theorem then is generalized to the case of a commutative algebra generated by a finite number of operators (Theorem~\ref{inv-existence-ext}). Also, the question of whether the almost invariant half-spaces of $\mathfrak A$ and $\overline{\mathfrak A}^{WOT}$ are the same is investigated (Corollary~\ref{A-iff-WOT-A} and an example after Theorem~\ref{inv-existence}).

In this paper we will occasionally refer to some standard facts about invariant subspaces of operators and algebras of operators. For a general account on invariant subspaces and transitive algebras, see~\cite{RR03}. A good review of this topic can be found in~\cite{AA02}.

{\bf Acknowledgement.} The author wants to express his deep gratitude to V.~Troitsky for many useful discussions and suggestions and wishes to thank A.~Tcaciuc for useful comments.

\section{Finite-dimensional ``errors'' of almost invariant half-spaces}\label{errors-study}

Observe that the finite-dimensional subspace $F$ appearing in the equation~\eqref{Y+F} is by no means unique. However the minimal dimension of a subspace satisfying this condition is unique. Some simple properties of a subspace of minimal dimension satisfying~\eqref{Y+F} are collected in the following lemma.

\begin{lemma}\label{char-min-dim}
Let $Y\subseteq X$ be a subspace, $\mathcal C$ be a collection of bounded 
operators on $X$ and $G\subseteq X$ be a finite-dimensional space of the 
smallest dimension such that $TY\subseteq Y+G$ for all $T\in\mathcal C$. Then
\begin{enumerate}
	\item $Y+G=Y\oplus G$;
	\item if $P:Y\oplus G\to G$ is the projection along $Y$ then 
	      $$\span\bigcup\limits_{T\in\mathcal C}PT(Y)=G;$$
	\item if $\mathcal C$ consists of a single operator, that is $\mathcal C=\{T\}$, and if $P:Y\oplus G\to G$ is the projection along $Y$ then 
	      $$PT(Y)=G.$$
Moreover, in this case $G$ can be chosen so that $G\subseteq TY$.	      
\end{enumerate}
\end{lemma}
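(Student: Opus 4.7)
For part (i), the plan is a standard minimality argument: if $Y\cap G$ were nontrivial, I would pick a vector space complement $G_0$ of $Y\cap G$ inside $G$, so that $G=(Y\cap G)\oplus G_0$ algebraically. Then $Y+G=Y+G_0$ because $Y\cap G\subseteq Y$, and in particular $TY\subseteq Y+G_0$ for every $T\in\mathcal{C}$, but $\dim G_0<\dim G$, contradicting the choice of $G$. Closedness of $Y+G_0$ is not needed for this step since the almost-invariance condition is purely algebraic.

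For part (ii), I would let $H=\span\bigcup_{T\in\mathcal{C}}PT(Y)$ and note that $H\subseteq G$ because each $PT(Y)\subseteq G$. For each $T\in\mathcal{C}$ and $y\in Y$, the decomposition along $Y\oplus G$ (available by (i)) gives $Ty=(Ty-PTy)+PTy$ with $Ty-PTy\in Y$ and $PTy\in H$, so $TY\subseteq Y+H$. By the minimality of $\dim G$ we must have $\dim H=\dim G$, and since $H\subseteq G$, this forces $H=G$.

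For (iii), when $\mathcal{C}=\{T\}$ the set $PT(Y)$ is already a linear subspace of $G$ (being the image of a linear subspace under the linear map $PT$), so its span coincides with itself and equals $G$ by (ii). For the ``moreover'' clause, I would pick a basis $g_{1},\ldots,g_{n}$ of $G$ and, using $PT(Y)=G$, lift it to vectors $y_{1},\ldots,y_{n}\in Y$ with $PTy_{i}=g_{i}$. Setting $G'=\span\{Ty_{1},\ldots,Ty_{n}\}\subseteq TY$, each $g_{i}=Ty_{i}-(Ty_{i}-g_{i})$ lies in $G'+Y$, so $G\subseteq Y+G'$ and hence $Y+G\subseteq Y+G'$; combined with $Y+G'\subseteq Y+G$ (from $G'\subseteq Y+G$), we get $Y+G'=Y+G\supseteq TY$. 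Thus $G'$ is a valid error of dimension at most $n$, and by minimality it is another choice of $G$, now sitting inside $TY$.

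The only mild obstacle I anticipate is being careful that in (i) the complement $G_{0}$ is taken purely as a vector space complement (no topology needed inside the finite-dimensional $G$), and in (iii) that the lifted subspace $G'$ has the same dimension as $G$; the latter is automatic since $G'\twoheadrightarrow PT(G')\supseteq\{g_{1},\ldots,g_{n}\}=$ basis of $G$, forcing $\dim G'\ge n$, and the reverse inequality follows from minimality. Everything else is bookkeeping with the direct-sum decomposition provided by (i).
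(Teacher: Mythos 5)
Your proposal is correct and follows essentially the same route as the paper's proof: the minimality argument for (i) (the paper discards a single vector of $Y\cap G$ rather than the whole intersection, but this is the same idea), the observation that $\span\bigcup_{T}PT(Y)$ is a smaller valid error for (ii), and the lifting of a basis of $G$ through $PT$ to produce an error inside $TY$ for (iii). No gaps.
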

\begin{proof}
(i) Suppose that there exists a non-zero $g\in G\cap Y$. Build $g_2,\dots 
g_n\in G$ such that $\{g,g_2,\dots, g_n\}$ is a basis of $G$. Denote 
$G_1 = \span\{g_2,\dots,g_n\}$. It is clear that $TY\subseteq Y+G_1$ for all 
$T\in\mathcal C$. However $\dim G_1<\dim G$.

(ii) Define $F=\span\{g\in G\colon v+g\in TY\mbox{ for some }v\in Y, T\in\mathcal C\}$. Clearly $F=\span\bigcup\limits_{T\in\mathcal C}PT(Y)$.

We claim that $TY\subseteq Y+F$ for all $T\in\mathcal C$. Indeed, if 
$y\in Y$ and $T\in\mathcal C$ then $Ty=v+g$ for some $v\in Y$ and $g\in G$. By 
definition of $F$ we get: $g\in F$, hence $Ty\in Y+F$. 

Since $F\subseteq G$ and $G$ has the smallest dimension among the spaces with 
the property $TY\subseteq Y+G$ for all $T\in\mathcal C$, we get $G=F$.

(iii) The first part of this statement follows immediately from (ii). Let's prove the ``moreover'' part. Let $g_1,\dots,g_n$ be a basis of $G$. By (ii), there exist $u_1,\dots,u_n$ and $y_1,\dots,y_n$ in $Y$ such that $Tu_i=y_i+g_i$ ($i=1,\dots,n$). Put $f_i=Tu_i$ and $F=[f_i]_{i=1}^n$. Then clearly $F\subseteq TY$. Also $Y+F=Y+G$, so that $TY\subseteq Y+F$. From the minimality of $G$ we obtain that $\dim F=\dim G$.
\end{proof}

The following example shows that $\bigcup\limits_{T\in\mathcal C}PT(Y)$ may not be a linear space even in the case when $\mathcal C$ is an algebra of operators. 

\begin{example}
Let $X=\ell_2(\mathbb Z)$. Define $T,S\in L(X)$ by 
$$
Te_{0}=e_1,\quad Te_{-1}=e_2,\quad Te_i=0\mbox{ if }i\ne 0,-1,
$$
and
$$
Se_0=e_3,\quad Se_i=0\mbox{ if }i\ne 0.
$$
Since $T^2=S^2=TS=ST=0$, the algebra $\mathfrak A$ generated by $T$ and $S$ consists exactly of the operators of form $aT+bS$ where $a$ and $b$ are arbitrary scalars.

Let $Y=[e_i]_{i\le 0}$. Then clearly $\mathfrak AY\subseteq Y+F$ where $F=\span\{e_1,e_2,e_3\}$, and $F$ is the space of the smallest dimension satisfying this condition. If $P:Y\oplus F\to F$ is the projection along $Y$ then $\bigcup_{R\in\mathfrak A}PR(Y)$ is not a linear space. If it were, it would have been equal to $F$, since it contains the basis of $F$. However the vector $e_2+e_3$ is not in this union.
\end{example}

Suppose $Y$ is a half-space that is almost invariant under a collection $\mathcal C$ of operators on $X$, that is, formula~\eqref{Y+F} holds for every operator $T$ in $\mathcal C$ with some $F$. One may ask if it is possible that $F$ does not depend on $T$. The following simple reasoning shows that in case of algebras of operators, this can only happen if the algebra already has a common invariant half-space.

\begin{proposition}\label{same-F}
 Let $Y\subseteq X$ be a half-space and $\mathfrak A$ an algebra of 
 operators. Suppose that there exists a finite-dimensional space $F$ such that 
 for each $T\in\mathfrak A$ we have $TY\subseteq Y+F$. Then there exists a
 half-space that is invariant under $\mathfrak A$.
\end{proposition}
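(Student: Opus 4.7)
My plan is to locate the invariant half-space in the ``interval'' between $Y$ and $Y+F$ rather than inside $Y$. The first candidate one might try, the largest invariant subspace contained in $Y$, namely $Y^\circ := \{y\in Y : Ty \in Y \text{ for all }T\in\mathfrak A\}$, is always $\mathfrak A$-invariant and of infinite codimension in $X$, but there is no obvious reason for it to be infinite-dimensional (the ``Landau/rank'' map $\mathfrak A \to L(Y,F)$, $T \mapsto PT|_Y$, need not have finite-dimensional image), so I would abandon it in favor of an invariant subspace slightly \emph{larger} than $Y$.

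The candidate is obtained by enlarging the target rather than shrinking $Y$. Set $Z := Y+F$; since $F$ is finite-dimensional and $Y$ is closed, $Z$ is closed, and since $X/Y$ is infinite-dimensional while $(Y+F)/Y$ has dimension at most $\dim F$, $Z$ is still a half-space. Now define
\[
W := \bigl\{z \in Z : Tz \in Z \text{ for every } T \in \mathfrak A\bigr\} \;=\; Z \cap \bigcap_{T\in\mathfrak A} T^{-1}(Z).
\]
I would then check, in order, that $W$ is closed (intersection of preimages of a closed subspace); that $Y \subseteq W$, which is merely a rewriting of the hypothesis $\mathfrak AY\subseteq Y+F$; and that $W \subseteq Z$, so $W$ inherits infinite codimension from $Z$ and is therefore a half-space.

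The only remaining point is $\mathfrak A$-invariance of $W$. Given $w \in W$ and $T \in \mathfrak A$, one has $Tw \in Z$ directly from the definition of $W$; to upgrade this to $Tw \in W$, I would pick arbitrary $S \in \mathfrak A$ and observe that $S(Tw) = (ST)w$ lies in $Z$ because $ST \in \mathfrak A$ (this is the one place where the algebra structure is essential) and $w \in W$. I do not anticipate any real obstacle: the only conceptual step is the choice to seek the invariant half-space in the interval $[Y, Y+F]$ rather than inside $Y$, and once the set $W$ is written down, every required property follows immediately from the definitions together with the closure of $\mathfrak A$ under composition.
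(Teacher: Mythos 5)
Your proof is correct, and it takes a genuinely different route from the paper's. The paper first replaces $F$ by an error space $G$ of \emph{minimal} dimension and then goes ``upwards'': using Lemma~\ref{char-min-dim}(ii) it identifies $Y\oplus G$ with $Y+\span\bigcup_{T\in\mathfrak A}TY$, which is manifestly $\mathfrak A$-invariant, closed (being $Y$ plus a finite-dimensional space), and a half-space. You instead go ``downwards'' from $Z=Y+F$, taking the largest subspace $W=Z\cap\bigcap_{T\in\mathfrak A}T^{-1}(Z)$ mapped into $Z$ by every member of the algebra; invariance then follows from $ST\in\mathfrak A$, closedness from continuity of each $T$, and the half-space property from the sandwich $Y\subseteq W\subseteq Y+F$. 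All of these steps check out. What your argument buys is independence from the minimality analysis: you never need Lemma~\ref{char-min-dim} or the direct-sum decomposition $Y\oplus G$, so the proof is more self-contained. What the paper's argument buys is the sharper structural conclusion that the \emph{minimal} enlargement $Y\oplus G$ is itself already invariant, i.e.\ it exhibits the invariant half-space explicitly as $Y+\span\mathfrak A(Y)$ rather than as an abstract intersection of preimages. Your opening remark about discarding the candidate $Y^\circ$ inside $Y$ is also well taken: that subspace is invariant but could in principle fail to be infinite-dimensional, so moving to the interval $[Y,\,Y+F]$ is the right call.
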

\begin{proof}
Let $G$ be a space of the smallest dimension such that $TY\subseteq Y+G$ for all $T\in\mathfrak A$. We claim that $Y+G$ is invariant under every operator in $\mathfrak A$.

Denote $\mathfrak A(Y)=\bigcup_{T\in\mathfrak A}TY$. Clearly $\mathfrak A(Y)$ is invariant under $\mathfrak A$. Hence, so is $\span\mathfrak A(Y)$. Denote $Z=Y+\span\mathfrak A(Y)$. Since $TY\subseteq\span\mathfrak A(Y)$ for every $T\in\mathfrak A$, we obtain that $Z$ is invariant under $\mathfrak A$.

By Lemma~\ref{char-min-dim}(ii), if $P:Y\oplus G\to G$ is a projection along $Y$ then $P\big(\span\mathfrak A(Y)\big)=G$. Hence $Y\oplus G=Y\oplus P\big(\span\mathfrak A(Y)\big)=Y+\span\mathfrak A(Y)=Z$, so that $Y\oplus G$ is invariant under $\mathfrak A$.
\end{proof}

\begin{definition}
 Let $T\in L(X)$ be an arbitrary operator and $Y\subseteq X$ be a linear subspace. We will write $d_{Y,T}$ for the smallest $n$ such that there exists $F$ with $TY\subseteq Y+F$ and $\dim F=n$.
\end{definition}

The following observation is obvious.

\begin{lemma}\label{quotient}
Let $T\in L(X)$ be an operator and $Y\subseteq X$ be a subspace. Let $q:X\to X/Y$ be a quotient map. Then $Y$ is $T$-almost invariant if and only if $(qT)|_Y$ is of finite rank. Moreover, $\dim(qT)(Y)=d_{Y,T}$.
\end{lemma}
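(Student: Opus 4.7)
The plan is to prove both the equivalence and the dimension formula simultaneously, by establishing the two inequalities $d_{Y,T}\ge\dim(qT)(Y)$ and $d_{Y,T}\le\dim(qT)(Y)$, with the convention that the right-hand side is $\infty$ when $(qT)|_Y$ has infinite rank. The guiding observation is that $q(Y)=0$, so applying $q$ to any almost invariance relation annihilates the $Y$-part and leaves only the error.

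First I would handle the easy direction. Suppose $Y$ is $T$-almost invariant, so $TY\subseteq Y+F$ for some finite-dimensional $F$. Applying $q$ yields $(qT)(Y)=q(TY)\subseteq q(Y+F)=q(F)$, and $\dim q(F)\le\dim F$. Taking $F$ of minimal dimension $d_{Y,T}$ we obtain that $(qT)|_Y$ has finite rank and $\dim(qT)(Y)\le d_{Y,T}$.

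For the reverse direction, assume $(qT)|_Y$ has finite rank $n$, choose a basis $v_1,\dots,v_n$ of $(qT)(Y)$, and lift each $v_i$ to some $w_i\in X$ with $q(w_i)=v_i$. Set $F=\span\{w_1,\dots,w_n\}$. Given $y\in Y$, write $q(Ty)=\sum_i\alpha_iv_i$ and put $f=\sum_i\alpha_iw_i\in F$; then $q(Ty-f)=0$, so $Ty-f\in Y$ and hence $Ty\in Y+F$. This shows $Y$ is $T$-almost invariant and $d_{Y,T}\le\dim F\le n=\dim(qT)(Y)$.

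Combining the two inequalities gives the desired equality and completes the equivalence. There is no substantive obstacle here: the only mildly delicate point is making sure the lift $F$ actually has dimension equal to $\dim(qT)(Y)$, which is automatic since the $w_i$ are chosen so that $q(w_i)=v_i$ are linearly independent, forcing the $w_i$ themselves to be independent and $q|_F$ to be injective.
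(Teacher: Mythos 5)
Your proof is correct; the paper itself offers no argument (it labels the lemma an obvious observation), and your two inequalities are exactly the standard details being left to the reader. The only point worth noting is that the step $q(Ty-f)=0\Rightarrow Ty-f\in Y$ uses $\ker q=Y$, which holds because $Y$ is norm closed under the paper's convention for ``subspace.''
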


To proceed, we need the following two auxiliary lemmas.

\begin{lemma}\label{small-indep}
 Let $Y\subseteq X$ be a linear subspace and $\{u_i\}_{i=1}^N$ be a collection 
 of linearly independent vectors in $X$ such that $[u_i]_{i=1}^N\cap Y=\{0\}$. Let $\{v_i\}_{i=1}^N\subseteq X$ be arbitrary. Then for all but finitely many $\alpha$ we have $\{v_i+\alpha u_i\}_{i=1}^N$ is linearly independent and $[v_i+\alpha u_i]_{i=1}^N\cap Y=\{0\}$.
\end{lemma}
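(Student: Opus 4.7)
My plan is to pass to the algebraic quotient $X/Y$ and collapse the two conditions in the conclusion to a single linear-independence statement in one vector space.

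The starting observation is that for any finite family $\{w_i\}_{i=1}^N$ in $X$, the conjunction ``$\{w_i\}$ linearly independent and $[w_i]\cap Y=\{0\}$'' is equivalent to linear independence of the images $\{q(w_i)\}$ in $X/Y$, where $q\colon X\to X/Y$ is the quotient map; one direction follows by pulling a purported relation back to $X$, the other by pushing it forward and using the hypothesis $[w_i]\cap Y=\{0\}$. Writing $\bar u_i=q(u_i)$ and $\bar v_i=q(v_i)$, the hypothesis of the lemma says exactly that $\{\bar u_i\}_{i=1}^N$ is linearly independent in $X/Y$, and what I need to prove is that $\{\bar v_i+\alpha\bar u_i\}_{i=1}^N$ is linearly independent in $X/Y$ for all but finitely many scalars $\alpha$.

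To do this I would pick linear functionals $\varphi_1,\dots,\varphi_N$ on $X/Y$ biorthogonal to $\bar u_1,\dots,\bar u_N$, i.e., $\varphi_i(\bar u_j)=\delta_{ij}$; these exist by elementary linear algebra since $\{\bar u_i\}$ is a finite linearly independent set. Setting $A=(\varphi_i(\bar v_j))_{i,j=1}^N$, the resulting $N\times N$ test matrix is
$$M(\alpha)=\bigl(\varphi_i(\bar v_j+\alpha\bar u_j)\bigr)_{i,j=1}^{N}=A+\alpha I_N,$$
whose determinant $\det(A+\alpha I_N)$ is a monic polynomial in $\alpha$ of degree exactly $N$, and therefore has at most $N$ zeros. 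For every $\alpha$ that is not a zero, applying each $\varphi_i$ to a purported relation $\sum_j c_j(\bar v_j+\alpha\bar u_j)=0$ yields $M(\alpha)\vec c=0$ and hence $\vec c=0$, so $\{\bar v_i+\alpha\bar u_i\}$ is linearly independent in $X/Y$. Pulling back through $q$ gives both conclusions of the lemma simultaneously.

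There is essentially no obstacle to this argument. The only minor point needing care is that $Y$ is merely a linear subspace, not necessarily closed, so $X/Y$ must be read as the purely algebraic quotient and the $\varphi_i$ as linear (not continuous) functionals; since the entire argument is finite-dimensional linear algebra, no topological input is required.
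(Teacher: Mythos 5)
Your proof is correct and follows essentially the same route as the paper's: both pass to the quotient modulo $Y$ to merge the two conclusions into a single linear-independence statement, and both then reduce the problem to the non-vanishing of $\det(A+\alpha I)$ for all but finitely many $\alpha$. The only difference is bookkeeping --- you form an $N\times N$ matrix via biorthogonal functionals, while the paper uses an $M\times M$ coordinate matrix (padded with zero rows) in the finite-dimensional quotient $(Y+F)/Y$.
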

\begin{proof}
 Let $F=\span\{u_i, v_i\colon i=1,\dots,N\}$. Let $G=\big(Y+F\big)/Y$. Denote
 $x_i=u_i+Y\in G$, $z_i=v_i+Y\in G$. Then the set $\{x_i\}_{i=1}^N$ is 
 linearly 
 independent. Clearly, to establish the lemma it is enough to prove that 
 the set $\{z_i+\alpha x_i\}_{i=1}^N$ is linearly independent for all but finitely many $\alpha$.
 
 Denote $M=\dim G$. Let $\{b_i\}_{i=1}^M$ be a basis of $G$ such that 
 $b_i=x_i$ 
 for all $1\le i\le N$.
 Denote the coordinates of vectors $z_i$ in this basis by $z_{ij}$. 
 Let $A$ be the $M\times M$-matrix with first $N$
 rows consisting of the coordinates of $z_i$ ($i=1,\dots,N$), the last $M-N$
 rows being zero rows:
 \begin{displaymath}
  A=\left[\begin{array}{ccccc}
           z_{11} & z_{12} & \cdots & z_{1,M-1} & z_{1M}\\
           \vdots &        & \cdots & & \vdots\\
           z_{N1} & z_{N2} & \cdots & z_{N,M-1} & z_{NM}\\
                0 &      0 & \cdots & 0 & 0\\
           \vdots &        & \cdots & & \vdots\\
                0 &      0 & \cdots & 0 & 0
          \end{array}
    \right]
 \end{displaymath}
 Since the spectrum of $A$ is finite, $\det(A+\alpha I)\ne 0$ for all but finitely many $\alpha$. For these $\alpha$, the
 rows of $A+\alpha I$ must be linearly independent. In particular, the first
 $N$ rows are linearly independent. However the first $N$ rows are exactly the 
 representations of the vectors $z_i+\alpha x_i$ in the basis 
 $\{b_i\}_{i=1}^M$.
\end{proof}

\begin{lemma}\label{min-dim}
Let $Y\subseteq X$ be a linear subspace and $T\in B(X)$. Let 
$f_1,\dots,f_n\in TY$ be such that no non-trivial linear combination of 
$\{f_1,\dots,f_n\}$ belongs to $Y$. Then $n\le d_{Y,T}$.
\end{lemma}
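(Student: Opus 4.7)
The plan is to show directly that any minimal error space for $T$ at $Y$ must contain $n$ linearly independent vectors, by exhibiting them explicitly. Concretely, fix a subspace $F\subseteq X$ with $TY\subseteq Y+F$ and $\dim F = d_{Y,T}$, and for each $i$ decompose
\[
 f_i = v_i + g_i\quad\text{with }v_i\in Y,\ g_i\in F,
\]
which is possible since $f_i\in TY\subseteq Y+F$. The conclusion $n\le d_{Y,T}$ will follow immediately once I verify that $\{g_1,\dots,g_n\}$ is a linearly independent set in $F$.

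The one step to carry out is exactly this linear independence, and it should come essentially for free from the hypothesis on the $f_i$. Indeed, if a nontrivial relation $\sum_i\alpha_i g_i=0$ held, then
\[
 \sum_i\alpha_i f_i \;=\; \sum_i\alpha_i v_i + \sum_i\alpha_i g_i \;=\; \sum_i\alpha_i v_i\;\in Y,
\]
contradicting the standing assumption that no nontrivial linear combination of the $f_i$ lies in $Y$. Hence $g_1,\dots,g_n$ are linearly independent in $F$, giving $n\le\dim F=d_{Y,T}$.

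I do not expect any real obstacle; the lemma is essentially a definition chase, which is presumably why it is recorded here as a preparatory step. The only point to be mildly careful about is that $Y$ is assumed only to be a linear subspace (not necessarily norm closed), so the quotient $X/Y$ is a priori only a vector space and one cannot directly invoke Lemma~\ref{quotient}. The argument above sidesteps this issue by staying entirely algebraic and never passing to the quotient; alternatively, one can phrase it as saying that the images of $f_1,\dots,f_n$ in the algebraic quotient $X/Y$ are linearly independent, and they coincide with the images of $g_1,\dots,g_n\in F$, so $F$ has dimension at least $n$.
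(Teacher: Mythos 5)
Your proof is correct. The paper's own argument is the same dimension count in a different wrapper: it passes to the quotient map $q:X\to X/Y$, notes that the hypothesis says exactly that $qf_1,\dots,qf_n$ are linearly independent, and then invokes Lemma~\ref{quotient} to identify $\dim(qT)(Y)$ with $d_{Y,T}$. You instead fix a minimal error subspace $F$ with $TY\subseteq Y+F$ and $\dim F=d_{Y,T}$, split each $f_i=v_i+g_i$ along $Y+F$, and observe that a nontrivial relation among the $g_i$ would push a nontrivial combination of the $f_i$ into $Y$. The two routes are equivalent in substance, but yours is slightly more self-contained: it does not rely on Lemma~\ref{quotient}, which is formally stated for norm closed subspaces, whereas here $Y$ is only assumed linear --- a point you correctly flag and which the paper glosses over (harmlessly, since the argument is purely algebraic either way). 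One tiny remark: the decomposition $f_i=v_i+g_i$ need not be unique if $Y\cap F\neq\{0\}$, but your argument only needs existence of some such decomposition, so nothing is lost.
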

\begin{proof}
Let $q:X\to X/Y$ be the quotient map. Then $qf_1,\dots,qf_n$ are linearly independent. Since $qf_1,\dots,qf_n\in (qT)(Y)$, we get $n\le\dim(qT)(Y)=d_{Y,T}$ by Lemma~\ref{quotient}.
\end{proof}

The following theorem is the main statement in this section. Recall that, according to our convention, the term ``subspace'' stands for a norm closed subspace.

\begin{theorem}\label{uniform-bdd}
Let $\mathfrak S$ be a subspace of $L(X)$. Suppose that $Y$ is a linear subspace of $X$ that is almost invariant under~$\mathfrak S$. Then
\begin{displaymath}
 \sup\limits_{S\in\mathfrak S}d_{Y,S}<\infty.
\end{displaymath}
\end{theorem}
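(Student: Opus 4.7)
I propose to prove the theorem via the Baire category theorem applied to $\mathfrak{S}$, which is a Banach space because it is norm-closed in $L(X)$. For each integer $n\ge 0$, let
\[
A_n=\{S\in\mathfrak{S}\mid d_{Y,S}\le n\}.
\]
The hypothesis that $Y$ is almost invariant under every $S\in\mathfrak{S}$ is exactly $\mathfrak{S}=\bigcup_{n\ge 0} A_n$. Once I show each $A_n$ is norm-closed, Baire's theorem produces some $A_n$ with nonempty interior: there exist $S_0\in\mathfrak{S}$ and $\varepsilon>0$ such that $S_0+S\in A_n$ whenever $S\in\mathfrak{S}$ and $\norm{S}<\varepsilon$. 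For arbitrary $S\in\mathfrak{S}$, choosing $t>0$ with $t\norm{S}<\varepsilon$ gives $S_0+tS\in A_n$, and the elementary facts $d_{Y,A+B}\le d_{Y,A}+d_{Y,B}$ (add the two error subspaces) together with $d_{Y,tS}=d_{Y,S}$ for $t\ne 0$ yield $d_{Y,S}\le 2n$ uniformly, which is the desired conclusion.

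The heart of the argument is thus the closedness of $A_n$. Assume $S_k\to S$ in norm with $d_{Y,S_k}\le n$ for all $k$, and suppose for contradiction that $d_{Y,S}\ge n+1$. By Lemma \ref{min-dim} applied via the quotient map $q:X\to X/Y$ of Lemma \ref{quotient}, I can choose $u_1,\dots,u_{n+1}\in Y$ so that no non-trivial linear combination of $Su_1,\dots,Su_{n+1}$ belongs to $Y$. For each $k$, since $\dim (qS_k)(Y)\le n$, the vectors $qS_ku_1,\dots,qS_ku_{n+1}$ are linearly dependent in $X/Y$, yielding scalars $\lambda^{(k)}=(\lambda_1^{(k)},\dots,\lambda_{n+1}^{(k)})$ with $\max_i\abs{\lambda_i^{(k)}}=1$ and $\sum_i\lambda_i^{(k)}S_ku_i\in Y$. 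By compactness of the sphere in $\mathbb{R}^{n+1}$ I extract a subsequence along which $\lambda^{(k)}\to\lambda\ne 0$, and a routine norm estimate (using $\norm{S_k-S}\to 0$ and the boundedness of $\lambda^{(k)}$) gives $\sum_i\lambda_i^{(k)}S_ku_i\to\sum_i\lambda_iSu_i$ in $X$.

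I expect the main obstacle to be the very last move: concluding that the limit $\sum_i\lambda_iSu_i$ lies in $Y$, which would contradict the choice of the $u_i$. When $Y$ is norm-closed this is immediate since each $\sum_i\lambda_i^{(k)}S_ku_i$ lies in $Y$. For a general linear subspace $Y$ the limit only lies in $\overline{Y}$, and additional care is needed: the limit vector sits in $SY\subseteq Y+F_S$, where $F_S$ is a minimal finite-dimensional error subspace for $S$, and one has to argue via Lemma \ref{char-min-dim}(i) that a suitable choice of $F_S$ forces any element of $(Y+F_S)\cap\overline{Y}$ to lie in $Y$. Apart from this delicate point the proof is entirely formal.
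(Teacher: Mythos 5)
Your Baire-category strategy is a genuinely different route from the paper's, and for a \emph{closed} subspace $Y$ it is complete and correct: each $A_n$ is then closed by exactly the compactness argument you give (the limits $\sum_i\lambda_i^{(k)}S_ku_i\to\sum_i\lambda_iSu_i$ stay in $Y$), Baire applies to the Banach space $\mathfrak S$, and subadditivity plus homogeneity of $d_{Y,\cdot}$ finish the job. Since every application of the theorem in the paper is to a closed $Y$ (half-spaces), this is a shorter and more conceptual argument where it applies. The paper's own proof is quite different: assuming $d_{Y,S_k}\to\infty$, it constructs a single operator $S=\sum_ka_kS_k\in\mathfrak S$ with coefficients decaying fast enough (relative to norms of certain projections and biorthogonal functionals, and using Lemma~\ref{small-indep} to keep the defects of the partial sums large) and derives a quantitative contradiction from $d_{Y,S}<\infty$; no limit is ever taken inside $Y$.

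For the theorem as stated, however, with $Y$ an arbitrary \emph{linear} subspace, the ``delicate point'' you flag is a genuine gap, and your proposed patch cannot close it: the sets $A_n$ simply need not be closed. Take $X=\ell_2$, $Y=\span\{e_j\mid j\in\mathbb N\}$ (the finitely supported vectors), $v^{(k)}=\sum_{j=1}^k2^{-j}e_j$, $v=\sum_{j=1}^\infty2^{-j}e_j$, and $S_kx=x_1v^{(k)}$, $Sx=x_1v$. Then $S_k\to S$ in norm and $d_{Y,S_k}=0$ for all $k$ (as $S_kY\subseteq Y$), yet $d_{Y,S}=1$; so $A_0$ is not closed, even though $Y$ is almost invariant under the closed span of the $S_k$, so the hypotheses of the theorem are in force. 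Your suggested fix --- choose $F_S$ so that $(Y+F_S)\cap\overline Y=Y$ --- is impossible here, since $\overline Y=\ell_2$ and hence $(Y+F_S)\cap\overline Y=Y\oplus F_S\ne Y$ for every admissible $F_S$. The variant $S=\sum_{j=1}^m x_jv_j$ with $v_1,\dots,v_m$ linearly independent modulo $Y$ shows moreover that $\overline{A_0}$ can contain operators with $d_{Y,\cdot}=m$ for every $m$, so passing to the closures $\overline{A_n}$ does not rescue the decomposition either. To cover the stated generality you need a mechanism that avoids taking limits whose target is only known to lie in $\overline Y$ --- which is precisely what the paper's series construction is designed to do.
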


\begin{proof}
For every $S\in\mathfrak S$, fix a subspace $F_S\subseteq X$ such that 
$SY\subseteq Y+F_S$ and $\dim F_S=d_{Y,S}$. By Lemma~\ref{char-min-dim}, $Y+F_S$ is a direct sum. Fix $P_S:Y\oplus F_S\to F_S$ the projection along $Y$. Also fix a basis $(f_i^S)_{i=1}^{d_{Y,S}}$ of $F_S$ and a tuple $(g_i^S)_{i=1}^{d_{Y,S}}$ in $Y$ such that $(P_SS)g_i^S=f_i^S$ (this can be done by Lemma~\ref{char-min-dim}(iii)).

Suppose that the statement of the theorem is not true. Then there exists a sequence of operators $(S_k)\subseteq\mathfrak S$ such that the sequence $(d_{Y,S_k})_{k=1}^\infty$ is strictly increasing. Without loss of generality, $\norm{S_k}=1$.

We will inductively construct a sequence $(a_k)$ of scalars such that the
following two conditions are satisfied for every $m$.
\begin{enumerate}
 \item\label{cond-1} If $T_m=\sum\limits_{k=1}^m a_kS_k$ then 
       $N_m:=d_{Y,T_m}\ge d_{Y,S_m}$.
 \item\label{cond-2} Let 
$$C_m=\sup\limits_{b_1,\dots,b_{N_m}\in [-1,1]}\Bignorm{\sum_{i=1}^{N_m} b_ig_i^{T_m}}\cdot\max_{i=1,\dots,N_m}\bignorm{(f_i^{T_m})^*},$$
where $(f_i^{T_m})^*$ is the $i$-th biorthogonal functional for $(f_i^{T_m})_{i=1}^{N_m}$ in $F_{T_m}^*$, and
$$
 D_m=\min\Big\{1,\frac{1}{C_1\cdot\norm{P_{T_1}}},\dots,
 \frac{1}{C_m\cdot\norm{P_{T_m}}}\Big\}.
$$
Then $0<a_1\le\frac{1}{2}$ and $0<a_{m+1}<\tfrac{1}{2^{m+1}}D_m$ for all $m\ge 1$.
\end{enumerate}

Indeed, on the first step put $a_1=\frac{1}{2}$. Suppose that $a_1,\dots,a_m$ have been constructed. Define $D_m$ as in~\eqref{cond-2}. Denote for convenience $N=d_{Y,S_{m+1}}$. Let $u_i=f_i^{S_{m+1}}$ and $v_i=T_mg_i^{S_{m+1}}$, $i=1,\dots,N$. By Lemma~\ref{small-indep} we can find $0<\alpha<\frac{1}{2^{m+1}}D_m$ such that no non-trivial linear combination of vectors from the set $\{v_i+\alpha u_i\}_{i=1}^{N}$ is contained in $Y$. Put $a_{m+1}=\alpha$. This makes both conditions~\eqref{cond-1} and~\eqref{cond-2} satisfied for $m+1$. Indeed, condition~\eqref{cond-2} is satisfied immediately. Let's check condition~\eqref{cond-1}. Denote for convenience $y_i=g_i^{S_{m+1}}$. Observe: for each $i=1,\dots, N$, we have $T_{m+1}y_i=T_my_i+\alpha S_{m+1}y_i=v_i+\alpha u_i+w_i$ where $w_i$ is some vector in $Y$. Since no linear combination of $\{v_i+\alpha u_i\}_{i=1}^{N}$ is contained in $Y$, the same is true for $\{T_{m+1}y_i\}_{i=1}^N$. Condition~\eqref{cond-1} now follows from Lemma~\ref{min-dim}.

Denote $S=\sum\limits_{k=1}^\infty a_kS_k$. By condition~\eqref{cond-2},
$a_k\le\tfrac{1}{2^k}$ for all $k\in\mathbb N$, so that $S$ is well-defined.
For every $m\in\mathbb N$, denote $R_m=\sum\limits_{k=m+1}^\infty a_kS_k$, so
that $S=T_m+R_m$. By condition~\eqref{cond-2}, we get:
$\norm{R_m}<\frac{1}{C_m\cdot\norm{P_m}}$ for all $m\in\mathbb N$.

Clearly, $S\in\mathfrak S$. By assumptions of the theorem, $SY=Y\oplus F_S$.
Denote $n=\dim F_S<\infty$. Pick $m\in\mathbb N$ such that $N_m>n$ and put $z_i=Sg_i^{T_m}$, $i=1,\dots,N_m$. Since $N_m>n$, there exists a sequence
$(b_i)_{i=1}^{N_m}$ of scalars such that $\max_i\abs{b_i}=1$ and 
$$
z:=\sum\limits_{i=1}^{N_m}b_iz_i\in Y.
$$
Consider $y=\sum\limits_{i=1}^{N_m}b_ig_i^{T_m}$. We have 
$$
T_my=Sy-R_my=z-R_my,
$$
hence 
$$
(P_{T_m}T_m)y=-(P_{T_m}R_m)y.
$$
Clearly, for each $i=1,\dots,N_m$, we have $b_i=(f_i^{T_m})^*(P_{T_m}T_my)$.
Let $k$ be such that $\abs{b_k}=1$.
Then
\begin{equation*}
\begin{split}
1=\abs{b_k}=\bigabs{(f_k^{T_m})^*(P_{T_m}T_m\,y)}
&\ \le\bignorm{(f_k^{T_m})^*}\cdot\norm{P_{T_m}T_m\,y}=\\
=\bignorm{(f_k^{T_m})^*}\cdot\norm{P_{T_m}R_m\,y}
\le&\ \bignorm{(f_k^{T_m})^*}\cdot\norm{P_{T_m}} \cdot\norm{R_m}\cdot\Bignorm{\sum\limits_{i=1}^{N_m}b_ig_i^{T_m}}\le\\
\le\norm{P_{T_m}}\cdot\norm{R_m}\cdot&\ C_m
 <\norm{P_{T_m}}\,\frac{1}{C_m\cdot\norm{P_{T_m}}}\,C_m
=1
\end{split}
\end{equation*}
which is a contradiction.
\end{proof}

\section{Invariant subspaces of algebras having almost invariant half-spaces}
\label{inv-study}

In this section we study some connections between the invariant subspaces of an algebra of operators and the almost invariant half-spaces of this algebra. In particular, we establish that if a norm-closed algebra generated by a single operator has an almost invariant half-space then it has an invariant half-space. Then we generalize this to commutative algebras generated by a finite number of operators. Also, we study the question when the almost invariant half-spaces  of an algebra and of its WOT-closure are the same.

It is well-known that the invariant subspaces of an algebra of operators coincide with those of the WOT-closure of this algebra. Remarkably, the same statement holds for almost invariant half-spaces, provided that the algebra is norm closed.

\begin{proposition}\label{WOT-closure}
Let $Y$ be a subspace of $X$ and $\mathfrak A$ an algebra of operators acting on $X$. Let $N\in\mathbb N$ be such that $d_{Y,T}\le N$ for all $T\in\mathfrak A$. Then $d_{Y,T}\le N$ for all $T\in\overline{\mathfrak A}^{WOT}$.
\end{proposition}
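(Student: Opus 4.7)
The plan is to argue by contradiction using the criterion in Lemma~\ref{quotient}. Let $q: X \to X/Y$ be the quotient map; since $Y$ is (norm) closed, $X/Y$ is a Banach space and its dual is naturally identified with the annihilator $Y^\perp \subseteq X^*$. By Lemma~\ref{quotient}, the hypothesis says that $\dim (qT)(Y) \le N$ for all $T\in\mathfrak A$, and we want the same for all $T$ in the WOT-closure.

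Suppose towards a contradiction that some $T\in\overline{\mathfrak A}^{WOT}$ satisfies $d_{Y,T} \ge N+1$. Then by Lemma~\ref{quotient} again we can pick $y_1,\dots,y_{N+1}\in Y$ such that $qTy_1,\dots,qTy_{N+1}$ are linearly independent in $X/Y$. By a standard Hahn--Banach argument in $X/Y$, there exist continuous functionals $\tilde\phi_1,\dots,\tilde\phi_{N+1}\in (X/Y)^*$ biorthogonal to these vectors, i.e.\ $\tilde\phi_i(qTy_j)=\delta_{ij}$. Pulling back via $q$, we obtain $\phi_1,\dots,\phi_{N+1}\in Y^\perp \subseteq X^*$ with $\phi_i(Ty_j)=\delta_{ij}$.

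Now let $(T_\lambda)\subseteq\mathfrak A$ be a net with $T_\lambda\to T$ in WOT. Then for each pair $(i,j)$, $\phi_i(T_\lambda y_j)\to\phi_i(Ty_j)=\delta_{ij}$. Thus the $(N+1)\times(N+1)$ matrices $M_\lambda:=[\phi_i(T_\lambda y_j)]_{i,j}$ converge entrywise to the identity, so by continuity of the determinant $\det M_\lambda\ne 0$ for all sufficiently large $\lambda$. Fix such a $\lambda$: if $\sum_j c_j\, T_\lambda y_j\in Y$, then applying each $\phi_i$ (which annihilates $Y$) gives $M_\lambda c = 0$, forcing $c=0$. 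Hence $qT_\lambda y_1,\dots,qT_\lambda y_{N+1}$ are linearly independent in $X/Y$, so $d_{Y,T_\lambda}\ge N+1$ by Lemma~\ref{quotient}, contradicting the hypothesis on $\mathfrak A$.

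The only substantive point is the realization of the biorthogonal functionals as elements of $Y^\perp$; this is where the convention that ``subspace'' means ``norm closed'' is crucially used, since otherwise $X/Y$ would not be Hausdorff and the Hahn--Banach step would fail. Everything else is a routine combination of Lemma~\ref{quotient}, the defining property of WOT-convergence (pairing against elements of $X^*$ applied to elements of $X$), and continuity of the determinant.
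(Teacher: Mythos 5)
Your proof is correct. It follows the same skeleton as the paper's --- pick $N+1$ vectors $y_j\in Y$ with $qTy_1,\dots,qTy_{N+1}$ linearly independent in $X/Y$ and show this independence persists for approximants $T_\lambda\in\mathfrak A$ --- but the mechanism for the persistence step is genuinely different. The paper first reduces to the SOT-closure (implicitly invoking the standard fact that the WOT- and SOT-closures of a convex set coincide) and then uses that linear independence of a finite tuple is an open condition in the norm of $X/Y$, so that $\norm{(T_\alpha-T)u_i}<\varepsilon$ suffices. You instead test independence against biorthogonal functionals $\phi_i\in Y^\perp$, which turns WOT-convergence into entrywise convergence of the scalar matrices $M_\lambda\to I$; invertibility of $M_\lambda$ for large $\lambda$ then forces $qT_\lambda y_1,\dots,qT_\lambda y_{N+1}$ to be independent. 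This buys you a fully self-contained argument at the WOT level: you never need the convexity of $\mathfrak A$ or the SOT reduction, and in fact your argument proves the conclusion for the WOT-closure of an arbitrary set of operators satisfying the hypothesis. The paper's route is marginally shorter if one takes the SOT--WOT coincidence for granted. One tiny stylistic caveat: with nets, ``for all sufficiently large $\lambda$'' should be read as ``eventually'' (i.e., for all $\lambda$ beyond some index), which is exactly what net convergence gives you, so the step is sound.
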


\begin{proof}
It is enough to prove that if $T\in\overline{\mathfrak A}^{SOT}$ then $d_{Y,T}\le N$. Suppose this is not true. Let $T\in\overline{\mathfrak A}^{SOT}$ be an operator with $d_{Y,T}\ge N+1$. Let $F\subseteq X$ be such that $\dim F=d_{Y,T}$ and $TY\subseteq Y\oplus F$. Fix $N+1$ linearly independent vectors $(f_i)_{i=1}^{N+1}$ in $F$. By Lemma~\ref{char-min-dim}(iii), there exist $(u_i)_{i=1}^{N+1}\subseteq Y$ such that for each $i=1,\dots, N+1$ we have $Tu_i=y_i+f_i$ for some $y_i\in Y$. Since $Y\cap F=\{0\}$, $[Tu_i]_{i=1}^{N+1}\cap Y=\{0\}$ and $Tu_1,\dots,Tu_{N+1}$ are linearly independent.

Fix a net $(T_\alpha)\subseteq\mathfrak A$ such that $T_\alpha\overset{SOT}{\to} T$. Let $q:X\to X/Y$ be the quotient map. Since $[Tu_i]_{i=1}^{N+1}\cap Y=\{0\}$ and $Tu_1,\dots,Tu_{N+1}$ are linearly independent, the collection $\{(qT)u_i\}_{i=1}^{N+1}$ is linearly independent. Observe that if $\varepsilon>0$ is sufficiently small then each collection $\{v_i\}_{i=1}^{N+1}$ satisfying $\norm{v_i-(qT)u_i}<\varepsilon$ as $i=1,\dots,N+1$ is again linearly independent.

Fix $\alpha_0$ such that for all $\alpha\ge\alpha_0$ we have $\norm{(T_\alpha-T)(u_i)}<\varepsilon$ for all $i=1,\dots,N+1$. Then $\norm{(qT_\alpha-qT)(u_i)}<\varepsilon$, so that the collection $\{(qT_\alpha)u_i\}_{i=1}^{N+1}$ is linearly independent for all $\alpha\ge\alpha_0$. By Lemma~\ref{quotient}, this, however, implies that $d_{Y,T_\alpha}\ge N+1$ for all $\alpha\ge\alpha_0$ which contradicts the assumptions.
\end{proof}

\begin{corollary}\label{A-iff-WOT-A}
Let $\mathfrak A$ be a norm closed algebra of operators on $X$ and $Y$ be a half-space in $X$. Then $Y$ is $\mathfrak A$-almost invariant if and only if $Y$ is $\overline{\mathfrak A}^{WOT}$-almost invariant.
\end{corollary}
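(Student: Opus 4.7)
The plan is to observe that the corollary is an immediate combination of the two results immediately preceding it, namely Theorem~\ref{uniform-bdd} and Proposition~\ref{WOT-closure}. There are two implications to verify, one entirely trivial and one requiring the uniform boundedness of the error dimensions.

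First I would dispense with the easy direction. If $Y$ is $\overline{\mathfrak A}^{WOT}$-almost invariant, then since $\mathfrak A\subseteq\overline{\mathfrak A}^{WOT}$, every operator in $\mathfrak A$ already has $Y$ as an almost invariant subspace, so $Y$ is $\mathfrak A$-almost invariant. No use of the hypothesis that $\mathfrak A$ is norm closed is needed here.

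For the non-trivial direction, suppose $Y$ is $\mathfrak A$-almost invariant. Since $\mathfrak A$ is norm closed and is in particular a (norm-closed) linear subspace of $L(X)$, Theorem~\ref{uniform-bdd} applies with $\mathfrak S=\mathfrak A$ and produces a finite $N:=\sup_{S\in\mathfrak A}d_{Y,S}$. Proposition~\ref{WOT-closure} then immediately lifts this uniform bound: we get $d_{Y,T}\le N$ for every $T\in\overline{\mathfrak A}^{WOT}$, which is exactly the assertion that $Y$ is almost invariant under every operator in $\overline{\mathfrak A}^{WOT}$.

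There is no real obstacle here; the only point worth a moment's thought is checking that the hypotheses of Theorem~\ref{uniform-bdd} and Proposition~\ref{WOT-closure} are met. For the former, $\mathfrak A$ being a norm-closed algebra is certainly a (closed) subspace, which is all Theorem~\ref{uniform-bdd} requires. For the latter, the statement only needs $\mathfrak A$ to be an algebra and some uniform bound $N$ on $\{d_{Y,T}:T\in\mathfrak A\}$, both of which are in hand. The role of the hypothesis that $\mathfrak A$ is norm closed is exactly to guarantee via Theorem~\ref{uniform-bdd} that such an $N$ exists; without it, the implication would fail in general.
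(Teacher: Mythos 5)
Your proposal is correct and follows exactly the paper's own argument: the forward direction combines Theorem~\ref{uniform-bdd} (applied to the norm-closed subspace $\mathfrak A$ of $L(X)$ to get the uniform bound $N$) with Proposition~\ref{WOT-closure}, and the converse is immediate from $\mathfrak A\subseteq\overline{\mathfrak A}^{WOT}$. Nothing is missing.
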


\begin{proof}
If $Y$ is $\mathfrak A$-almost invariant then by Theorem~\ref{uniform-bdd} there exists $N\in\mathbb N$ such that $d_{Y,S}<N$ for all $S\in\mathfrak A$. By Proposition~\ref{WOT-closure} the same is true for all $S\in\overline{\mathfrak A}^{WOT}$. This implies that $Y$ is $\overline{\mathfrak A}^{WOT}$-almost invariant. The converse statement is evident.
\end{proof}

We will show later in this section that the condition of $\mathfrak A$ being norm closed is essential here.

The following lemma is standard:

\begin{lemma}\label{dim-codim}
Let $X$ and $Y$ be Banach spaces and $T\in L(X,Y)$ be of finite rank. Then $\dim({\rm Range}\ T)=\codim(\ker T)$.
\end{lemma}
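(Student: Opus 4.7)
The plan is to invoke the standard first isomorphism theorem from linear algebra, adapted to the Banach-space setting. Define $\tilde T:X/\ker T\to Y$ by $\tilde T(x+\ker T)=Tx$. This map is well-defined (the value is independent of the representative) and linear essentially by construction, and it is injective because $\tilde T(x+\ker T)=0$ forces $Tx=0$, i.e.\ $x\in\ker T$. Its image is exactly $\operatorname{Range}(T)$. Consequently $\tilde T$ is a linear isomorphism from the quotient vector space $X/\ker T$ onto $\operatorname{Range}(T)$, so the two have the same vector-space dimension.

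The only thing to check is that $\dim(X/\ker T)$ really is what we mean by $\codim(\ker T)$. Since $T$ is continuous, $\ker T$ is a closed subspace of $X$, so $X/\ker T$ is a genuine Banach space and its (algebraic) dimension is $\codim(\ker T)$ by definition. Moreover, because $T$ has finite rank, $\operatorname{Range}(T)$ is finite-dimensional, so both numbers are finite and equal. There is no real obstacle here; the lemma is a direct consequence of the factorization $T=\tilde T\circ q$, where $q:X\to X/\ker T$ is the quotient map.
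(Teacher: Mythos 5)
Your argument is correct: it is the standard first isomorphism theorem argument, and the paper itself states this lemma as ``standard'' and gives no proof, so your write-up simply supplies the omitted routine verification. Nothing further is needed (the remark about $\ker T$ being closed is not even required for the purely algebraic dimension count).
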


We will now introduce some notations. If $Y$ and $Z$ are two subspaces of $X$ and $Y\subseteq Z$ then the symbol $\codim_ZY$ will stand for the codimension of $Y$ in $Z$.

Let $T\in L(X)$ be an operator and $Y\subseteq X$ be a half-space. Consider two procedures of constructing new linear spaces:
$$
\begin{array}{ll}
D_T(Y)=\{y\in Y\colon Ty\in Y\}\quad & \mbox{``going downwards''},\\
U_T(Y)=Y+TY & \mbox{``going upwards''}.
\end{array}
$$
Clearly $D_T(Y)\subseteq Y\subseteq U_T(Y)$.

\begin{lemma}\label{procedures-hs}
Let $Y\subseteq X$ be a half-space and $T\in L(X)$. If $Y$ is $T$-almost invariant then both $D_T(Y)$ and $U_T(Y)$ are half-spaces. Moreover, $\codim_YD_T(Y)=\codim_{U_T(Y)}Y=d_{Y,T}$.
\end{lemma}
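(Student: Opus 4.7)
The plan is to handle the two constructions separately, using the structural results from Lemma~\ref{char-min-dim} for $U_T(Y)$ and the kernel-range description for $D_T(Y)$.

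For $U_T(Y)$, I would first fix a space $F \subseteq X$ of minimal dimension with $TY \subseteq Y + F$, so $\dim F = d_{Y,T}$. By Lemma~\ref{char-min-dim}(i), $Y + F = Y \oplus F$, and by Lemma~\ref{char-min-dim}(iii) the $F$ can even be chosen inside $TY$. With this choice, one containment $U_T(Y) = Y + TY \supseteq Y + F = Y \oplus F$ is immediate, while the other $U_T(Y) = Y + TY \subseteq Y + F$ is the almost invariance hypothesis. Hence $U_T(Y) = Y \oplus F$, which is a sum of a closed subspace and a finite-dimensional one and so is itself a closed subspace. From $U_T(Y) = Y \oplus F$ we read off $\codim_{U_T(Y)}Y = \dim F = d_{Y,T}$, and $U_T(Y)$ is a half-space since it contains the infinite-dimensional $Y$ and differs from $Y$ by only finitely many dimensions, leaving the codimension in $X$ infinite.

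For $D_T(Y)$, I would let $q : X \to X/Y$ be the quotient map and rewrite
\begin{equation*}
D_T(Y) = \{y \in Y : Ty \in Y\} = \{y \in Y : qTy = 0\} = \ker\bigl((qT)|_Y\bigr).
\end{equation*}
By Lemma~\ref{quotient}, $(qT)|_Y : Y \to X/Y$ is a bounded operator of finite rank $d_{Y,T}$, so its kernel is a closed subspace of $Y$, hence of $X$. Lemma~\ref{dim-codim} applied to this map gives
\begin{equation*}
\codim_Y D_T(Y) = \codim_Y \ker\bigl((qT)|_Y\bigr) = \dim\bigl((qT)(Y)\bigr) = d_{Y,T}.
\end{equation*}

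Finally I would check the half-space property for $D_T(Y)$: it has finite codimension in $Y$ (equal to $d_{Y,T}$) and $Y$ is infinite-dimensional, so $D_T(Y)$ is infinite-dimensional; and $D_T(Y) \subseteq Y$ together with $Y$ having infinite codimension in $X$ gives $D_T(Y)$ infinite codimension in $X$. There is no real obstacle here; the only place where one has to be a little careful is keeping track of closedness, but both constructions produce either a kernel of a bounded operator or the algebraic sum of a closed subspace with a finite-dimensional one, so closedness is automatic.
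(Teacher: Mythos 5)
Your proof is correct and follows essentially the same route as the paper: the paper also reduces the $D_T(Y)$ claim to Lemma~\ref{dim-codim} applied to a finite-rank map with kernel $D_T(Y)$ (it uses $PT|_Y$ with $P$ the projection onto $F$ along $Y$, where you use $(qT)|_Y$ -- interchangeable by Lemma~\ref{quotient}), and it treats the $U_T(Y)$ statement as immediate from almost invariance. Your explicit use of Lemma~\ref{char-min-dim}(iii) to place $F$ inside $TY$ and conclude $U_T(Y)=Y\oplus F$ is a welcome elaboration of the step the paper leaves to the reader.
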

\begin{proof}
The statement about $U_T(Y)$ follows immediately from the definition of an almost invariant subspace. Let's verify the statement about $D_T(Y)$. Clearly we only need to verify the ``moreover'' part.

Let $TY\subseteq Y+F$ where $F$ is such that $\dim F=d_{Y,T}$. By Lemma~\ref{char-min-dim}(i), we have $Y+F=Y\oplus F$. Let $P:Y\oplus F\to F$ be the projection along~$Y$. Then $D_T(Y)=\ker(PT|_Y)$. By Lemma~\ref{dim-codim} we get $\codim_YD_T(Y)=\dim({\rm Range}\ PT|_Y)=\dim F=d_{Y,T}$.
\end{proof}

The following lemma is the key statement of this chapter.

\begin{lemma}\label{key-lemma}
Suppose that $Y$ is a half-space in $X$ that is almost invariant under an operator $T\in L(X)$. If $d_{Y,T}>0$ and $d_{D^k_T(Y),T}\ge d_{Y,T}$ and $d_{U^k_T(Y),T}\ge d_{Y,T}$ for all $k\in\mathbb N$ then $d_{Y,T^m}\ge m$ for all $m\in\mathbb N$.
\end{lemma}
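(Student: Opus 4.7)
The plan is to fuse the two chains $D_T^k(Y)\supseteq D_T^{k+1}(Y)$ and $U_T^k(Y)\subseteq U_T^{k+1}(Y)$ into one bi-infinite filtration and then transport the action of $T^m$ across its graded pieces. Set $d:=d_{Y,T}\ge 1$ and, for $n\in\mathbb Z$, let $A_n:=D_T^n(Y)$ when $n\ge 0$ and $A_n:=U_T^{|n|}(Y)$ when $n\le 0$, so that $A_{n+1}\subseteq A_n$ and $T A_n\subseteq A_{n-1}$ throughout. By Lemma~\ref{procedures-hs} combined with the two standing hypotheses, $\codim_{A_n}A_{n+1}\ge d$ for every $n\in\mathbb Z$, with this lower bound already known to be an equality for $n=0$ and $n=-1$.

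Next, I would analyse the induced quotient maps $\tilde T_n\colon A_n/A_{n+1}\to A_{n-1}/A_n$, $y+A_{n+1}\mapsto Ty+A_n$. For $n\ge 0$ a direct calculation gives $D_T(D_T^n(Y))=D_T^{n+1}(Y)$, so $\ker\tilde T_n=0$. For $n\le -1$ the identity $U_T^{|n|+1}(Y)=U_T^{|n|}(Y)+T\cdot U_T^{|n|}(Y)$ makes $\tilde T_n$ surjective. Anchoring at the two exact codimensions at $n=0,-1$ and propagating outward, a sandwich argument---an injection of a space of dimension $\ge d$ into a space of dimension $d$, or a surjection of a space of dimension $d$ onto a space of dimension $\ge d$---forces $\dim A_n/A_{n+1}=d$ for every $n\in\mathbb Z$ and upgrades each $\tilde T_n$ to a bijection.

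With these bijections in hand, for each $0\le k\le m-1$ the $m$-fold composition $\tilde T_{k-m+1}\circ\cdots\circ\tilde T_k\colon D_T^k(Y)/D_T^{k+1}(Y)\to U_T^{m-k}(Y)/U_T^{m-k-1}(Y)$ is a bijection sending $y+A_{k+1}$ to $T^m y+A_{k-m+1}$. Its surjectivity yields
\[
T^m\bigl(D_T^k(Y)\bigr)+U_T^{m-k-1}(Y)=U_T^{m-k}(Y),\qquad 0\le k\le m-1.
\]
A downward induction on $j$, starting from the trivial equality $T^m Y+U_T^{m-1}(Y)=U_T^m(Y)$ and using at each step $U_T^j(Y)=T^m\bigl(D_T^{m-j}(Y)\bigr)+U_T^{j-1}(Y)\subseteq T^m Y+U_T^{j-1}(Y)$, then gives $T^m Y+U_T^j(Y)=U_T^m(Y)$ for all $0\le j\le m-1$. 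Specialising to $j=0$ yields $T^m Y+Y=U_T^m(Y)$, so
\[
d_{Y,T^m}=\dim\bigl((T^m Y+Y)/Y\bigr)=\dim\bigl(U_T^m(Y)/Y\bigr)=md\ge m.
\]

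The main obstacle will be the sandwich in the second paragraph: the hypotheses furnish only the one-sided bound $\codim\ge d$, and one has to combine the injectivity of the $\tilde T_n$ for $n\ge 0$, the surjectivity of those for $n\le -1$, and the two exact middle codimensions from Lemma~\ref{procedures-hs} in order to force equality throughout the bi-infinite chain and to realise $T^m$ as a genuine composition of bijections.
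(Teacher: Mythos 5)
Your proposal is correct, and at bottom it runs on the same engine as the paper's proof: the downward iterates supply $m$-fold preimages, the upward iterates stratify the images, and the hypothesis $d\ge d_{Y,T}$ on both chains, anchored at the exact equalities from Lemma~\ref{procedures-hs} at $Y$ itself, forces every successive codimension to equal $d_{Y,T}$. The difference is one of packaging: the paper fixes a single nonzero $u\in F$, builds explicit preimage vectors $u_k\in D_T^{k-1}(Y)$ and the explicit decomposition $U_T^k(Y)=Y\oplus F\oplus TF\oplus\cdots\oplus T^{k-1}F$, and exhibits $m$ vectors $T^mu_k$ in distinct strata, whereas you work with the graded quotients $A_n/A_{n+1}$ and the induced maps $\tilde T_n$ and compose bijections. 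Your version is slightly stronger in its conclusion --- it yields the exact value $d_{Y,T^m}=m\,d_{Y,T}$ rather than just $d_{Y,T^m}\ge m$ --- at the modest cost of having to verify (as the paper also does, implicitly) that each $A_n$ is a $T$-almost invariant half-space so that Lemma~\ref{procedures-hs} applies along the whole chain.
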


\begin{proof}
 
Denote for convenience $N=d_{Y,T}$. Fix $f_1^{0},\dots,f_N^{0}\in X$ such that $TY\subseteq Y\oplus F$ where $F=[f_i^{0}]_{i=1}^N$. In particular, $\dim F=d_{Y,T}$.
 
Suppose that $d_{D_T^k(Y),T}\ge N$ and $d_{U_T^k(Y),T}\ge N$ for all $k\in\mathbb N$. Denote $Y_0=Y$ and $Y_k=D^k_T(Y)$, $k\ge 1$. Since $Y_k=D_T(Y_{k-1})$ for all $k\ge 1$, it follows that $TY_k\subseteq Y_{k-1}$ as $k\ge 1$. 
 
We claim that for each $k\ge 1$ there exists an $N$-tuple $(f_1^{k},\dots,f_N^{k})$ in $Y_{k-1}$ such that 
\begin{enumerate}
\item $Y_k\oplus F_k=Y_{k-1}$ where $F_k=[f_i^{k}]_{i=1}^N$, and
\item if $P_k:Y_k\oplus F_k\to F_k$ is the projection along $Y_k$ then $(P_{k-1}T)f_i^{k}=f_i^{k-1}$ for all $i=1,\dots,N$ (if $k=1$ then we assume $P_0:Y\oplus F\to F$ is the projection along $Y$).
\end{enumerate}

Let $k=1$. By Lemma~\ref{char-min-dim}(iii), for each $i=1,\dots,N$, we can find $f_i^{1}\in Y$ such that $(P_0T)f_i^{1}=f_i^{0}$. Then (ii) is satisfied. Write $F_1=[f_i^{1}]_{i=1}^N$. Since $Y_1\cap F_1=\{0\}$ by definition of $Y_1$ and $\dim F_1=N=\codim_YY_1$ by Lemma~\ref{procedures-hs}, $Y_1\oplus F_1=Y$.

Suppose the claim is true for $k\ge 0$. Then $Y_k\oplus F_k=Y_{k-1}$. Since $TY_k\subseteq Y_{k-1}$ and $d_{Y_k,T}\ge N=\dim F_k$, we get $d_{Y_k,T}=N$. Then from Lemma~\ref{char-min-dim}(iii) for each $i=1,\dots,N$ there exists $f_i^{k+1}\in Y_k$ such that $(P_{k}T)f_i^{k+1}=f_i^{k}$, so that (ii) is satisfied for $k+1$. To show (i), write $F_{k+1}=[f_i^{k+1}]_{i=1}^N$ and observe: $Y_{k+1}\cap F_{k+1}=\{0\}$ by definition of $Y_{k+1}$ and $\dim F_{k+1}=N=\codim_{Y_k}Y_{k+1}$ by Lemma~\ref{procedures-hs}.

Observe that from condition (ii) of this claim we have: for each $k\ge 1$ there exists $y\in Y$ such that $T^kf_i^{k}=y+f_i^{0}$. That is, $f_i^{k}$ is a $k$-th ``preimage'' of $f_i^{0}$. It follows that any $f\in F$ has a $k$-th ``preimage'' in $Y_{k-1}$.

Denote $Z_0=Y$, $Z_k=U_T^k(Y)$, $k\ge 1$. That is, $Z_k=U_T(Z_{k-1})$. In particular, $TZ_{k-1}\subseteq Z_k$ for all $k\ge 0$. We claim that $Z_k=Y\oplus F\oplus TF\dots\oplus T^{k-1}F$. Indeed, for $k=0$ this is obvious. Suppose the claim is true for $k\ge 1$. Let's prove that $Z_{k+1}=Y\oplus F\oplus TF\dots\oplus T^{k}F$. We have $Z_{k+1}=U_T(Z_k)=Z_k+TZ_k=(Y\oplus F\oplus TF\dots\oplus T^{k-1}F)+(TY+TF+T^2F+\dots+T^kF)=(Y\oplus F\oplus TF\dots\oplus T^{k-1}F)+T^kF$ since $TY\subseteq Y\oplus F$. That is, $Z_{k+1}=Z_k+T^kF$. We only have to prove that this sum is direct. We have $\dim T^kF\le N$ since $\dim F=N$. On the other hand, $TZ_{k}\subseteq Z_{k+1}=Z_k+T^kF$. Since $d_{Z_k,T}\ge N$ for all $k\ge 0$, we get $\dim T^kF=N=d_{Z_k,T}$. By Lemma~\ref{char-min-dim}(i), the sum must be direct. 

Observe that in particular, this means that if $f\in F$ is non-zero then $T^kf\in Z_{k+1}\setminus Z_{k}$ ($k\ge 0$).

Let $u\in F$ be a non-zero vector, $m\in\mathbb N$ be arbitrary, and $k\in\{1,\dots,m\}$.  Put $u_k$ to be the $k$-th ``preimage'' of $u$, that is, such a vector in $Y_{k-1}$ that $T^ku_k=v_k+u$ for some $v_k\in Y$. Then $T^mu_k=T^{m-k}T^ku_k=T^{m-k}v_k+T^{m-k}u$. Since $v_k\in Y$, it follows that $T^{m-k}v_k\in Z_{m-k}$. Also since $u\ne 0$, we get $T^{m-k}u\in Z_{m-k+1}\setminus Z_{m-k}$. Since $Z_{m-k}\subseteq Z_{m-k+1}$ we have $T^mu_k\in Z_{m-k+1}\setminus Z_{m-k}$.
 
 This means that $T^mY$ contains $m$ vectors $\{T^m(u_k)\}_{k=1}^m$ such that 
 no non-zero linear combination of these vectors belongs to $Y$. By 
 Lemma~\ref{min-dim} we get: $d_{Y,T^m}\ge m$.
\end{proof}

As an immediate corollary we get:

\begin{theorem}\label{inv-existence}
Let $T\in L(X)$ be an operator and $\mathfrak A$ the norm closed algebra generated by $T$. If $\mathfrak A$ has an almost invariant half-space then $\mathfrak A$ has an invariant half-space.
\end{theorem}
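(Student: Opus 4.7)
The plan is to use Lemma~\ref{key-lemma} together with the uniform bound from Theorem~\ref{uniform-bdd} to run a descent on the nonnegative integer $d_{Y,T}$, terminating at a half-space with $d_{Y,T}=0$, i.e., a genuinely $T$-invariant half-space. Since $\mathfrak A$ is the norm closure of the polynomials in $T$, $T$-invariance of a closed subspace automatically upgrades to $\mathfrak A$-invariance, so this would complete the proof.

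Let $Y$ be an $\mathfrak A$-almost invariant half-space and, for each $S\in\mathfrak A$, fix a finite-dimensional $F_S$ with $SY\subseteq Y+F_S$. If $d_{Y,T}=0$ we are done; otherwise $d_{Y,T}>0$. Because $\mathfrak A$ is a norm closed subspace of $L(X)$, Theorem~\ref{uniform-bdd} gives $\sup_{S\in\mathfrak A}d_{Y,S}<\infty$; in particular $d_{Y,T^m}$ is bounded in $m$. The contrapositive of Lemma~\ref{key-lemma} then produces some $k\in\mathbb N$ with either $d_{D_T^k(Y),T}<d_{Y,T}$ or $d_{U_T^k(Y),T}<d_{Y,T}$: otherwise the lemma would force $d_{Y,T^m}\ge m$ for all $m$, contradicting the bound. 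Call the winning space $Y'$; iterated application of Lemma~\ref{procedures-hs} shows $Y'$ is still a half-space.

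To iterate the descent I need $Y'$ itself to be $\mathfrak A$-almost invariant, which is a direct finite-codimension check. In the downward case $Y'\subseteq Y$ with $\codim_Y Y'<\infty$; writing $Y=Y'+G$ with $G$ finite-dimensional, any $S\in\mathfrak A$ satisfies $SY'\subseteq SY\subseteq Y+F_S=Y'+(G+F_S)$. In the upward case $Y'=Y+G$ with $G$ finite-dimensional, and $SY'=SY+SG\subseteq Y'+(F_S+SG)$. In either case the error stays finite-dimensional, so $Y'$ is $\mathfrak A$-almost invariant and the dichotomy is available at the next step.

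Iterating, $d_{\cdot,T}$ strictly decreases at each stage, so after finitely many steps we reach an $\mathfrak A$-almost invariant half-space $Z$ with $d_{Z,T}=0$, i.e., $TZ\subseteq Z$; such a $Z$ is then invariant under every polynomial in $T$, hence under $\mathfrak A$. I do not foresee a serious obstacle: the heavy lifting is entirely in Lemma~\ref{key-lemma} and Theorem~\ref{uniform-bdd}, and the present theorem falls out as a corollary once the dichotomy above is correctly extracted, with the only bookkeeping point being the verification that the descent stays inside the class of $\mathfrak A$-almost invariant half-spaces.
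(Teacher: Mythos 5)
Your proposal is correct and follows essentially the same route as the paper: apply Theorem~\ref{uniform-bdd} to bound $d_{Y,T^m}$, use the contrapositive of Lemma~\ref{key-lemma} to strictly decrease $d_{\cdot,T}$ via $D_T$ or $U_T$, and iterate down to an invariant half-space. You even make explicit a bookkeeping step the paper leaves implicit here (that the new half-space remains $\mathfrak A$-almost invariant, so the descent and the uniform bound remain available), which the paper only spells out later in the proof of Theorem~\ref{inv-existence-ext}.
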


\begin{proof}
If $d_{Y,T}=0$ then there is nothing to prove. Let $d_{Y,T}>0$. Since $\mathfrak A$ is norm closed, $\sup_{S\in\mathfrak A}d_{Y,S}<\infty$ by Theorem~\ref{uniform-bdd}. In particular, $\sup_{m\in\mathbb N}d_{Y,T^m}<\infty$. By Lemma~\ref{key-lemma} we obtain that either $d_{D_T^k(Y),T}<d_{Y,T}$ or $d_{U_T^k(Y),T}<d_{Y,T}$ for some $k\in\mathbb N$. 

Applying this finitely many times we get a half-space $Z$ such that $d_{Z,T}=0$. Since $Z$ is $T$-invariant, it is $\mathfrak A$-invariant.
\end{proof}

This theorem allows us to get an earlier promised example of a (not necessarily closed) algebra $\mathfrak A$ whose almost invariant half-spaces are different from those of $\overline{\mathfrak A}^{WOT}$ (and even from those of the norm closure of $\mathfrak A$). This example also shows that, unlike in case of invariant subspaces, there exists an operator whose almost invariant half-spaces are different from those of the norm closed algebra generated by this operator.

\begin{example} Let $D$ be a Donoghue operator on $\ell_2$. That is, $D$ is a backward shift with non-zero weights $(w_n)_{n=1}^\infty$ which satisfy conditions $(\abs{w_n})_{n=1}^\infty$ is monotone decreasing and is in $\ell_2$ (see, e.g.,~\cite[Section 4.4]{RR03} for the properties of Donoghue operators). Put $\mathfrak A=\{p(D)\colon p\mbox{ is a polynomial such that }p(0)=0\}$. It was proved in~\cite{APTT} that $D$ has an almost invariant half-space. Then $\mathfrak A$ has an almost invariant half-space. However all the invariant subspaces of $D$ are finite dimensional (see~\cite[Theorem 4.12]{RR03}) and therefore $D$ has no invariant half-spaces.
By Theorem~\ref{inv-existence}, $\overline{\mathfrak A}^{\,\norm{\cdot}}$ has no almost invariant half-spaces.
\end{example}

The following result is a generalization of Theorem~\ref{inv-existence}.

\begin{theorem}\label{inv-existence-ext} Let $\mathfrak A$ be a norm-closed algebra generated by a finite number of pairwise commuting operators. If $\mathfrak A$ has an almost invariant half-space then $\mathfrak A$ has an invariant half-space.
\end{theorem}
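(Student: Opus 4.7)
My plan is to peel off the generators $T_1,\dots,T_n$ one at a time, mimicking for each generator the reduction used in the proof of Theorem~\ref{inv-existence}, and verifying that each application of $D_{T_i}$ or $U_{T_i}$ preserves the invariance already secured. Starting from an $\mathfrak A$-almost invariant half-space $Y$, I would inductively build half-spaces $Y=Y_0,Y_1,\dots,Y_n$ such that $Y_k$ is $\mathfrak A$-almost invariant and $T_j$-invariant for every $j\le k$. Then $Y_n$ is invariant under every generator and, being norm closed, invariant under the whole norm-closed algebra $\mathfrak A$.

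The passage from $Y_{k-1}$ to $Y_k$ would follow Theorem~\ref{inv-existence}. By Theorem~\ref{uniform-bdd}, $\sup_{S\in\mathfrak A}d_{Y_{k-1},S}<\infty$; in particular $\sup_m d_{Y_{k-1},T_k^m}<\infty$ since $T_k^m\in\mathfrak A$. Lemma~\ref{key-lemma} applied to $T_k$ then forces, whenever $d_{Y_{k-1},T_k}>0$, either $d_{D_{T_k}^m(Y_{k-1}),T_k}<d_{Y_{k-1},T_k}$ or $d_{U_{T_k}^m(Y_{k-1}),T_k}<d_{Y_{k-1},T_k}$ for some $m$. Since $d_{\,\cdot\,,T_k}$ is a non-negative integer, iterating this surgery finitely many times would produce a half-space $Y_k$ with $d_{Y_k,T_k}=0$, i.e.\ $T_kY_k\subseteq Y_k$.

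The real content is that $D_{T_k}$ and $U_{T_k}$ preserve everything needed for the iteration. The half-space property is safe by Lemma~\ref{procedures-hs}. If $Z$ is $T_j$-invariant and $T_j$ commutes with $T_k$, then $D_{T_k}(Z)$ and $U_{T_k}(Z)$ remain $T_j$-invariant: for $z\in D_{T_k}(Z)$, $T_k(T_jz)=T_j(T_kz)\in T_jZ\subseteq Z$, so $T_jz\in D_{T_k}(Z)$; and $T_j(Z+T_kZ)=T_jZ+T_kT_jZ\subseteq Z+T_kZ$. For $\mathfrak A$-almost invariance, the splitting $Z=D_{T_k}(Z)\oplus H$ from Lemma~\ref{procedures-hs} (with $\dim H=d_{Z,T_k}$) immediately gives
\[
 SD_{T_k}(Z)\subseteq SZ\subseteq Z+F_S=D_{T_k}(Z)+H+F_S,
\]
so $d_{D_{T_k}(Z),S}\le d_{Z,S}+d_{Z,T_k}$ for every $S\in\mathfrak A$. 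Likewise $U_{T_k}(Z)=Z\oplus F'$ with $F'\subseteq T_kZ$ of dimension $d_{Z,T_k}$ (Lemma~\ref{char-min-dim}(iii)); writing $w\in U_{T_k}(Z)$ as $z+f$ with $z\in Z$, $f\in F'$ yields $Sw\in U_{T_k}(Z)+F_S+SF'$, hence $d_{U_{T_k}(Z),S}\le d_{Z,S}+d_{Z,T_k}$. These uniform bounds keep $\sup_{S\in\mathfrak A}d_{Y_k,S}$ finite at each stage, so Theorem~\ref{uniform-bdd} may be re-applied at the next step.

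The main obstacle is precisely this bookkeeping: at every surgery one has to be sure of the half-space property, of the $\mathfrak A$-almost invariance (which is what authorizes the next use of Lemma~\ref{key-lemma}), and of the fact that the invariance already gained under earlier $T_j$ is not destroyed. Commutativity is used crucially and only in the last of these points; once it is in hand, the proof reduces to running the single-operator argument of Theorem~\ref{inv-existence} $n$ times in succession.
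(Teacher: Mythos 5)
Your proposal is correct and follows essentially the same route as the paper: iterate the single-generator reduction of Theorem~\ref{inv-existence} over $T_1,\dots,T_n$, using commutativity to show $D_{T_k}$ and $U_{T_k}$ preserve invariance under the earlier generators, and the finite-(co)dimension of the surgery to preserve $\mathfrak A$-almost invariance. Your explicit bound $d_{\,\cdot\,,S}\le d_{Z,S}+d_{Z,T_k}$ is just a more quantitative form of the paper's observation that the modified spaces differ from $Z$ by finite dimension or codimension.
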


\begin{proof}
Let $\mathfrak A$ be generated by pairwise commuting operators $T_1,\dots,T_n$ and let $Y$ be an $\mathfrak A$-almost invariant half-space. We will prove that there exists a half-space that is invariant under $T_k$ for each $k=1,\dots, n$.

First, observe that if $T\in\mathfrak A$ then both $D_T(Y)$ and $U_T(Y)$ are $\mathfrak A$-almost invariant because $\codim_YD_T(Y)<\infty$ and $\codim_{U_T(Y)}Y<\infty$ by Lemma~\ref{procedures-hs}. Next, we claim that if $S\in\mathfrak A$ is such that $Y$ is $S$-invariant then $D_T(Y)$ and $U_T(Y)$ are again $S$-invariant. Indeed, let $y\in D_T(Y)$, then $T(Sy)=STy\in Y$ since $Ty\in Y$ and $Y$ is $S$-invariant. Hence $Sy\in D_T(Y)$, so that $D_T(Y)$ is $S$-invariant. Let $u+v\in U_T(Y)$, with $u\in Y$ and $v\in Ty$ for some $y\in Y$. Then $S(u+v)=Su+STy=Su+TSy\in U_T(Y)$ since $Su, Sy\in Y$, so that $U_T(Y)$ is $S$-invariant.

For $k=1,\dots,n$, denote by $\mathfrak A_k$ the norm closed algebra generated by $T_k$. Clearly $\mathfrak A_k\subseteq\mathfrak A$ and hence every $\mathfrak A$-almost invariant half-space is $\mathfrak A_k$-almost invariant for all $k=1,\dots,n$.

Apply a finite sequence of procedures $D_{T_1}$ and $U_{T_1}$ to $Y$ to obtain a $T_1$-invariant half-space $Y_1$, as in the proof of Theorem~\ref{inv-existence}. By the discussion above, $Y_1$ is $\mathfrak A$-almost invariant. Apply a finite sequence of procedures $D_{T_2}$ and $U_{T_2}$ to $Y_1$ to obtain a $T_2$-invariant half-space. Then $Y_2$ is $T_1$- and $T_2$-invariant and still $\mathfrak A$-almost invariant. Repeat this procedure $n-2$ more times to get an $\mathfrak A$-invariant half-space.
\end{proof}

\end{document}